\title[Vanishing of characteristic classes for handlebody groups]{Vanishing of universal characteristic classes for
handlebody groups and boundary bundles} 
\author{Jeffrey Giansiracusa}
\email{j.h.giansiracusa@gmail.com}
\address{Department of Mathematical Sciences, University of Bath\\
Claverton Down \\
Bath, BA2 7AY \\ United Kingdom} 
\author{Ulrike Tillmann}
\email{tillmann@maths.ox.ac.uk}
 \address{Mathematical Institute, Oxford University\\
 24-29 St. Giles' \\
Oxford, OX1 3LB \\ United Kingdom} 
\date{9 February 2011}
\numberwithin{equation}{section}
\newtheorem{theorem}{Theorem}[section] 
\newtheorem{lemma}[theorem]{Lemma} 
\newtheorem{proposition}[theorem]{Proposition}
\newtheorem{thmA}{Theorem}
\newtheorem{corA}[thmA]{Corollary}
\theoremstyle{remark} 
\newtheorem{remark}[theorem]{Remark}
\newcommand{\Diff}{\mathrm{Diff}}
\newcommand{\Th}{\mathrm{Th}}
\newcommand{\bTh}{\mathbf{Th}}
\newcommand{\R}{\mathbb{R}}
\newcommand{\Z}{\mathbb{Z}}
\newcommand{\Q}{\mathbb{Q}}
\newcommand{\MT}{\mathbf{MTSO}}
\newcommand{\QQ}{Q}
\begin{document}
\begin{abstract}
  Using certain Thom spectra appearing in the study of cobordism
  categories, we show that the odd half of the Miller-Morita-Mumford
  classes on the mappping class group of a surface with negative Euler
  characteristic vanish in integral cohomology when restricted to the
  handlebody subgroup.  This is a special case of a more general
  theorem valid in all dimensions: universal characteristic classes made from
  monomials in the Pontrjagin classes (and even powers of the Euler
  class) vanish when pulled back from $B\Diff(\partial W)$ to
  $B\Diff(W)$.
\end{abstract}
\maketitle

\section{Introduction}

Let $\Sigma _g$ denote a closed oriented surface of genus $g$. Its 
mapping class group $\Gamma _g:= \pi _0 \Diff (\Sigma _g)$ is   
the group of connected
components of its  group of orientation preserving diffeomorphisms
$\Diff ( \Sigma _g)$.
Miller, Morita, and Mumford \cite{Miller, Morita, Mumford} defined
characteristic classes, known as the MMM
classes, $\kappa_i \in H^{2i} (\Gamma _g; \Z)$.  
By the proof of the Mumford conjecture
\cite{Madsen-Weiss} these classes freely
generate the rational cohomology ring in degrees increasing with $g$:
\[
\lim _{g \to \infty} H^* (\Gamma _g; \Q ) 
\simeq \Q [ \kappa_1, \kappa _2, \dots ].
\]

The mapping class group of a surface has various interesting
subgroups, and it is a natural question to ask how the MMM-classes
restrict to these subgroups.  Here we will be interested in the
handlebody subgroup $H_g$. To define it, fix a handlebody $W$ with
boundary $
\partial W = \Sigma _g$. $H_g$ contains those mapping classes of $\Sigma _g$ 
that can be extended across the interior of $W$. 

\begin{thmA}\label{handlebody-thm}
  For $g\geq 2$, the odd MMM-classes $\kappa_{2i+1} \in
  H^{4i+2}(\Gamma_g;\Z)$ vanish when restricted to the handlebody
  subgroup $H_g \subset \Gamma _g$.
\end{thmA}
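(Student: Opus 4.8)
The plan is to reduce Theorem~A to a statement about handlebody \emph{bundles} and then to combine the stability of the Pontrjagin classes with a fibrewise Stokes-type identity for the Gysin map. First I would pass from groups to classifying spaces of diffeomorphism groups. For $g\geq 2$ the identity component of $\Diff(\Sigma_g)$ is contractible (Earle--Eells), so $B\Diff(\Sigma_g)\simeq B\Gamma_g$; similarly $\Diff(W)$ has contractible identity component and, by a theorem on diffeomorphisms of handlebodies, its mapping class group restricts isomorphically onto $H_g$, so $B\Diff(W)\simeq BH_g$ and the inclusion $H_g\hookrightarrow\Gamma_g$ is modelled by the restriction map $\Diff(W)\to\Diff(\partial W)$. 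Over $B\Diff(W)$ there is the universal handlebody bundle $\bar\pi\colon\bar E\to B\Diff(W)$ whose vertical boundary $\partial\bar E$ is the pullback of the universal surface bundle $\pi\colon E\to B\Gamma_g$. Hence it suffices to prove: whenever a smooth surface bundle $\pi\colon E\to B$ is the vertical boundary of a smooth handlebody bundle $\bar\pi\colon\bar E\to B$, the classes $\kappa_{2i+1}(\pi)\in H^{4i+2}(B;\Z)$ vanish.

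Second, I would rewrite the odd $\kappa$'s in terms of Pontrjagin classes. With the usual convention $\kappa_j=\pi_!\bigl(e(T^vE)^{\,j+1}\bigr)$, where $T^vE$ is the vertical tangent bundle and $\pi_!$ is fibre integration over $\Sigma_g$, the identity $p_1(T^vE)=e(T^vE)^2$ for an oriented $2$-plane bundle gives
\[
\kappa_{2i+1}=\pi_!\bigl(e(T^vE)^{\,2i+2}\bigr)=\pi_!\bigl(p_1(T^vE)^{\,i+1}\bigr).
\]
Now I would extend $p_1(T^vE)$ over $\bar E$: writing $\iota\colon E=\partial\bar E\hookrightarrow\bar E$, the inward normal gives a splitting $\iota^*T^v\bar E\cong T^vE\oplus\underline{\R}$ of the rank-$3$ vertical tangent bundle of $\bar E$, so by stability of the Pontrjagin classes $p_1(T^vE)=\iota^*\bar p_1$ with $\bar p_1:=p_1(T^v\bar E)\in H^4(\bar E;\Z)$. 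Therefore $\kappa_{2i+1}=\pi_!\bigl(\iota^*(\bar p_1^{\,i+1})\bigr)$.

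Third, I would show that the composite $\pi_!\circ\iota^*\colon H^*(\bar E)\to H^{*-2}(B)$ vanishes identically. The mechanism is a fibrewise Stokes theorem: the handlebody bundle carries a relative Gysin map $\bar\pi_!\colon H^*(\bar E,\partial\bar E)\to H^{*-3}(B)$ (integration over the fibre $(W,\partial W)$), and fibre integration over the closed fibre $\partial W$ factors as $\pi_!=\pm\,\bar\pi_!\circ\delta$, where $\delta\colon H^*(\partial\bar E)\to H^{*+1}(\bar E,\partial\bar E)$ is the connecting homomorphism; over a point this is just the identity $\partial_*[W,\partial W]=\pm[\partial W]$, and it globalises by naturality of fibre integration (or via the Serre spectral sequence). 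Since $\delta\circ\iota^*=0$ by exactness of the long exact sequence of the pair $(\bar E,\partial\bar E)$, we conclude $\pi_!(\iota^*y)=0$ for all $y$, and in particular $\kappa_{2i+1}=0$ on $H_g$. The same argument yields the general statement of the abstract: for a rank-$2n$ oriented bundle $e^2=p_n$, so every even power of the Euler class is a monomial in the Pontrjagin classes, and each such monomial of $T^vE$ is $\iota^*$ of the corresponding monomial for $T^v\bar E$, hence dies under $\pi_!\circ\iota^*$.

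I expect the main obstacle to be making the identity $\pi_!=\pm\,\bar\pi_!\circ\delta$ precise and natural in families of manifolds with boundary. A clean way to do this, which also makes the \emph{universal} nature of the classes transparent, is to run the whole argument through the parametrised Pontrjagin--Thom construction: the surface bundle $\pi$ is classified by a map $B\to\Omega^\infty\MT(2)$ along which the MMM classes pull back, and the splitting $\iota^*T^v\bar E\cong T^vE\oplus\underline{\R}$ says exactly that, after composition with the map of spectra $\MT(2)\to\Sigma\MT(3)$ induced by $BSO(2)\to BSO(3)$ --- through which $\kappa_{2i+1}$, being the image of $p_1^{\,i+1}$, is pulled back --- this classifying map extends over the bounding handlebody bundle, so the relevant classes must vanish. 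Checking that this Thom-spectrum factorisation is compatible with the standard generators of $H^*(\MT(2);\Z)$, so that it kills precisely the monomials in the Pontrjagin classes together with the even powers of the Euler class, is the technical heart; the remaining ingredients --- the Euler-squared identity, the normal-bundle splitting, stability of $p_1$, and the exactness $\delta\circ\iota^*=0$ --- are routine.
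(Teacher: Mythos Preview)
Your core argument is correct and is a genuinely different, more elementary route than the one the paper takes. The paper proves Theorem~A as a special case of Theorem~B by a Thom-spectrum factorisation: it shows that the Madsen--Tillmann classifying map $\alpha_{\partial\pi}\colon B\to\Omega^\infty\MT(d)$ of the boundary bundle factors, up to homotopy, as $\delta\circ\beta_\pi$ through $QBSO(d+1)_+$ (this is their Proposition~3.4), and then computes that $\delta^*\widehat{X}=0$ for $X$ a Pontrjagin monomial via the Gysin map of the sphere bundle $S(\gamma_{d+1})\to BSO(d+1)$ (their Proposition~2.2). Your proof instead stays entirely at the level of the given bundle: you use stability of $p_1$ to write $\kappa_{2i+1}=\pi_!(\iota^*\bar p_1^{\,i+1})$ and then kill it with the fibrewise Stokes identity $\pi_!=\pm\bar\pi_!\circ\delta$ together with exactness $\delta\circ\iota^*=0$. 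This is essentially the same mechanism---the paper's $\delta^*$ on spectrum cohomology \emph{is} the Gysin map $\pi_!$ for $BSO(d)\hookrightarrow BSO(d+1)$, and their computation that Pontrjagin monomials die under it is the universal instance of your observation that classes in the image of $\iota^*$ die under $\pi_!$---but your packaging avoids the spectra entirely and is self-contained. What the paper's approach buys is that the vanishing is visibly a statement about the \emph{universal} tangential classes on $\Omega^\infty\MT(d)$ and fits into the cobordism-category framework; what yours buys is that a reader who has never seen $\MT(d)$ can follow it.

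One caution about your final paragraph: the Thom-spectrum mechanism you sketch there is not quite the one the paper uses. The paper does not compose with a map $\MT(2)\to\Sigma\MT(3)$ and argue that the result extends; rather, it factors the classifying map of the boundary bundle through the \emph{middle} term $QBSO(3)_+$ of the fibre sequence $\Omega^\infty\MT(3)\to QBSO(3)_+\xrightarrow{\delta}\Omega^\infty\MT(2)$, and the vanishing comes from $\delta^*\widehat{e^{2i+2}}=0$. Your direct Gysin argument already handles the point you were worried about (making $\pi_!=\pm\bar\pi_!\circ\delta$ precise is standard Poincar\'e--Lefschetz duality in families), so you do not need the spectrum formulation at all.
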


\begin{remark}
It is well-known that the analogue of Theorem A holds rationally for the 
Torelli group $I_g := \mathrm{ker} (\Gamma_g
\to \mathrm{Aut}(H_1(\Sigma_g;\Z))$. This can be proved by index theory, 
see \cite{Morita, Mumford}.
It remains a significant open problem whether the even kappa classes restrict 
non-trivially to the Torelli group.

Motivated by these questions, Sakasai \cite{Sakasai} has recently
proved a result closely related to Theorem A by rather different
methods. He shows that in a stable range the odd kappa classes
rationally vanish when restricted to the Lagrangian mapping class
subgroup $L_g := H_g I_g$. As our result holds without restriction to
the stable range and integrally, the question arises whether the same
holds also for $I_g$ and $L_g$.
\end{remark}

\begin{remark}
Recently (and after the completion of this work), Hatcher has
announced an analogue of the Madsen-Weiss theorem \cite{Madsen-Weiss}
for the handlebody mapping class group. The proof is an
adaptation of the  Galatius's proof
\cite{Galatius} of the analogue of the Madsen-Weiss theorem
for automorphism groups of free groups.  
Hatcher determines the cohomology of $H_g$
in the stable range, 
which
by
\cite{Hatcher-Wahl} is $(g-4)/2$,  
as that of a component of $QBSO(3)_+$. 
In view of 
Proposition 2.2 below, Hatcher's result implies Theorem \ref{handlebody-thm}
for the stable range and also implies that the even MMM classes
freely generate the cohomology ring of $H_g$ in the stable range.
\end{remark}

Theorem A is a special case of the more general Theorem
\ref{mainthm} below which is a statement about the diffeomorphism groups of 
manifolds of any dimension. 
Recall that for $g\geq 2$, 
the mapping class group $\Gamma _g$ is 
homotopy equivalent to the diffeomorphism group $\Diff (\Sigma _g)$ 
\cite{Earle-Eells}, and
the handlebody subgroup 
is homotopy equivalent to the diffeomorphism group of a 3-dimensional
handlebody of genus $g$ \cite{Hatcher1,Hatcher2}. Thus the
discrete mapping class groups may be replaced by the diffeomorphism groups. 
The more general result is
about how generalizations of the MMM-classes are
pulled back in cohomology under the restriction-to-the-boundary map, 
\[
r: B\Diff(W) \to B\Diff(\partial W),
\]
where  $W$ is  a $(d+1)$-dimensional manifold with boundary $\partial W = M$. 

More precisely,
let $\pi: E \to B$ be an oriented fibre bundle with closed fibres $M$
of dimension $d$, and  let
$T^\pi E \to E$ denote the fibrewise tangent bundle.  The generalized
MMM classes (or universal tangential classes) 
are defined by taking a monomial $X$ in the Euler class
$e$ and the Pontrjagin classes $p_i$ of $T^\pi E$
and then forming the pushforward
\[
\widehat{X}(E) :=  \pi_! X(T^\pi E) \in H^*(B;\Z),
\]
where $\pi_!: H^*(E) \to H^{*-d}(B)$ is the Gysin map of $\pi$, also known as
the integration over the fibre map.  In
particular, one obtains universal characteristic classes
$\widehat{X} \in H^*(B\Diff(M);\Z)$ by taking $E\to B$ to be the
universal $M$-bundle over $B\Diff(M)$.  In this notation
$\kappa_i =
\widehat{e^{i+1}}$ for $M=\Sigma_g$.

These generalized MMM classes have been studied intensively.
Sadykov \cite {Sadykov} shows that for $d$ even
they are the only rational 
characteristic classes of $d$-dimensional manifolds that are stable
in an appropriate sense. Ebert \cite{Ebert} furthermore shows that
for each of these classes there is a bundle of $d$-manifolds on which
it does not vanish (though this is not quite the case when $d$ 
is odd \cite{Ebert2}).

\begin{thmA}\label{mainthm}
  Suppose $W$ is an oriented manifold with boundary.  Then
  $r^*\widehat{X} \in H^*(B\Diff(W);\Z)$ vanishes whenever the dimension
  of $W$ is even, or  whenever it is odd and $X$ can be
  written as a monomial just in the Pontrjagin classes.
\end{thmA}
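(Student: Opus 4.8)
The plan is to recognise $r^*\widehat X$ as the pullback of a single universal cohomology class along a map that factors through a Thom spectrum killing that class. Rationally this is unsurprising — it is the de Rham identity $\int_{\partial W}\omega|_{\partial W}=\int_W d\omega$, which vanishes on closed forms — so the real content is the integral refinement. I would first reduce to a statement about the spectrum $\MT(d)$, where $d:=\dim\partial W$. By the parametrized Pontrjagin--Thom construction for bundles of closed oriented $d$-manifolds, the universal class $\widehat X\in H^*(B\Diff(\partial W);\Z)$ is pulled back from a class $\bar X\in H^{|X|-d}(\MT(d);\Z)$ along the stabilised Pontrjagin--Thom map $\alpha\colon\Sigma^\infty_+B\Diff(\partial W)\to\MT(d)$, where $|X|$ is the degree of $X$ and, under the Thom isomorphism $H^*(\MT(d);\Z)\cong H^{*+d}(BSO(d);\Z)$, the class $\bar X$ corresponds to $X(\gamma_d)$, with $\gamma_d\to BSO(d)$ the universal oriented bundle. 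It then suffices to show that the composite $\Sigma^\infty_+B\Diff(W)\xrightarrow{\Sigma^\infty_+ r}\Sigma^\infty_+B\Diff(\partial W)\xrightarrow{\alpha}\MT(d)$ sends $\bar X$ to zero.

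Thomifying the cofibre sequence $S(\gamma_{d+1})_+\to D(\gamma_{d+1})_+\to\Th(\gamma_{d+1})$ of the sphere bundle, disk bundle and Thom space of $\gamma_{d+1}\to BSO(d+1)$ by the virtual bundle $-\gamma_{d+1}$, and using $\gamma_{d+1}|_{BSO(d)}\cong\gamma_d\oplus\R$ so that the outer terms become $\Sigma^{-1}\MT(d)$ and $\Sigma^\infty_+BSO(d+1)$, produces a cofibre sequence of spectra
\[
\Sigma^{-1}\MT(d)\xrightarrow{\ i\ }\MT(d+1)\xrightarrow{\ p\ }\Sigma^\infty_+BSO(d+1)\xrightarrow{\ \partial\ }\MT(d).
\]
The geometric input is that the parametrized Pontrjagin--Thom datum of the fibrewise boundary of the universal $W$-bundle depends only on a collar of the boundary inside the fibres, whose vertical tangent data is recorded by the fibrewise tangent bundle of the universal $W$-bundle, a rank-$(d+1)$ bundle classified by a map $\tau\colon B\Diff(W)\to BSO(d+1)$. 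Making this localisation-near-the-boundary statement precise is the main obstacle (this is where the Thom-spectrum input from cobordism categories enters, cf.\ Proposition~2.2); it gives $\alpha\circ\Sigma^\infty_+ r\simeq\partial\circ\Sigma^\infty_+\tau$, whence $r^*\widehat X=(\Sigma^\infty_+\tau)^*\partial^*\bar X$ and it remains to prove that $\partial^*\bar X=0$ in $H^*(BSO(d+1);\Z)$.

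To finish, I would use exactness of the cofibre sequence in integral cohomology together with the Thom isomorphisms — under which the connecting map $i^*$ is identified with the restriction $j^*\colon H^*(BSO(d+1);\Z)\to H^*(BSO(d);\Z)$ along the standard map $j\colon BSO(d)\hookrightarrow BSO(d+1)$ — to see that $\partial^*\bar X=0$ precisely when $X(\gamma_d)$ lies in the image of $j^*$, i.e.\ is pulled back from $BSO(d+1)$. Since $j^*\gamma_{d+1}\cong\gamma_d\oplus\R$, Pontrjagin classes always pull back: $p_i(\gamma_d)=j^*p_i(\gamma_{d+1})$. If $\dim W=d+1$ is odd, then by hypothesis $X$ is a monomial in the $p_i$ only, so $X(\gamma_d)\in\mathrm{im}\,j^*$. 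If $\dim W=d+1$ is even, then $d$ is odd, $e(\gamma_d)\in H^d(BSO(d);\Z)$ is $2$-torsion, and — since all torsion in $H^*(BSO(m);\Z)$ has order $2$, so $\rho_2$ is injective on torsion, while $\rho_2 e(\gamma_d)=w_d(\gamma_d)=Sq^1 w_{d-1}(\gamma_d)$ by the Wu formula (using $d$ odd and $w_1=0$) — one gets $e(\gamma_d)=j^*\beta w_{d-1}(\gamma_{d+1})$; hence any monomial $X$ in $e$ and the $p_i$ has $X(\gamma_d)\in\mathrm{im}\,j^*$. In either case $\partial^*\bar X=0$, so $r^*\widehat X=0$.

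The crux is thus the localisation-near-the-boundary statement of the second step; the remaining arguments are formal manipulation of the cofibre sequence and elementary characteristic-class identities. It is worth noting that the hypotheses are sharp: when $\dim W$ is odd and $X$ contains an odd power of the Euler class, the sphere-bundle-transfer description of $\partial^*$ gives $\partial^*\bar X=\pm\chi(S^d)$ times a nonzero Pontrjagin monomial, i.e.\ $\pm2$ times a nonzero class, which need not vanish — consistent with Hatcher's computation of $H^*(H_g)$.
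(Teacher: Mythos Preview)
Your approach is essentially the same as the paper's: both arguments use the cofibre sequence $\MT(d+1)\to\Sigma^\infty_+BSO(d+1)\to\MT(d)$ and the factorisation of the boundary classifying map through $\Sigma^\infty_+BSO(d+1)$, then compute that the connecting map kills the relevant classes. Two points are worth noting.

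First, your description of the factorisation contains a slip. The fibrewise tangent bundle of the universal $W$-bundle lives over the \emph{total space} $E\Diff(W)\times_{\Diff(W)}W$, not over $B\Diff(W)$; there is no space-level map $\tau\colon B\Diff(W)\to BSO(d+1)$ classifying it. The correct stable map $\Sigma^\infty_+B\Diff(W)\to\Sigma^\infty_+BSO(d+1)$ is the Becker--Gottlieb transfer for the bundle followed by the suspension of the classifying map of the vertical tangent bundle (this is the paper's $\beta_\pi$). You clearly have the right picture in mind and flag this step as the crux, so this is a notational rather than conceptual error, but as written ``$\Sigma^\infty_+\tau$'' is not a well-defined object.

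Second, your cohomological computation takes a slightly longer route than the paper's. The paper identifies $\widetilde\delta^*$ (under the Thom isomorphism) with the Gysin map $\pi_!$ for the sphere bundle $S(\gamma_{d+1})\to BSO(d+1)$ and computes directly via the projection formula: $\pi_!(e^s\cdot\pi^*P)=\pi_!(e^s)\cdot P$, with $\pi_!(1)=0$ by degree and $\pi_!(e)=\chi(S^d)$, which vanishes for $d$ odd. Your exactness argument---$\partial^*\bar X=0$ iff $X(\gamma_d)\in\mathrm{im}\,j^*$---is equivalent (it is the Gysin long exact sequence), and your Bockstein/Wu identification $e(\gamma_d)=\beta w_{d-1}(\gamma_d)=j^*\beta w_{d-1}(\gamma_{d+1})$ for $d$ odd is correct, but more machinery than needed: the one-line Gysin computation $\pi_! e=\chi(S^d)=0$ suffices.
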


It is worth stating an immediate corollary of the above theorem.
\begin{corA}
  Given an oriented bundle $E\to B$ of closed manifolds, the classes
  $\widehat{X}(E)$ coming from monomials $X$ in Pontrjagin classes give
  obstructions to fibrewise oriented null-bordism of the bundle.
\end{corA}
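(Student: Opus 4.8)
The plan is to read the corollary off directly from Theorem \ref{mainthm}. Recall that a \emph{fibrewise oriented null-bordism} of an oriented bundle $\pi\colon E\to B$ of closed $d$-manifolds is a bundle $\bar\pi\colon\bar E\to B$ whose fibres are compact oriented $(d+1)$-manifolds with boundary, equipped with an orientation-preserving bundle isomorphism over $B$ between $E$ and the fibrewise boundary $\partial_{\mathrm{fib}}\bar E$. Since $H^*(B;\Z)=\prod_\alpha H^*(B_\alpha;\Z)$ over the path components $B_\alpha$, and the fibre of $\bar E$ is locally constant, I would first reduce to the case that $B$ is connected and the fibre of $\bar E$ is a fixed compact oriented manifold $W$ with $\partial W=M$.

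Next I would classify $\bar E$ by a map $f\colon B\to B\Diff(W)$, where $\Diff(W)$ denotes the orientation-preserving diffeomorphisms of $W$; this uses only that an oriented bundle of copies of $W$ has structure group $\Diff(W)$. Restriction of diffeomorphisms to $\partial W$ identifies the fibrewise boundary of the universal $W$-bundle over $B\Diff(W)$ with the pullback along $r\colon B\Diff(W)\to B\Diff(M)$ of the universal $M$-bundle. Hence $E$, being isomorphic over $B$ to $\partial_{\mathrm{fib}}\bar E$, is classified by the composite $r\circ f$, and the universality of the tangential classes $\widehat{X}$ gives
\[
\widehat{X}(E)\;=\;(r\circ f)^*\widehat{X}\;=\;f^*\bigl(r^*\widehat{X}\bigr)\in H^*(B;\Z).
\]
When $X$ is a monomial in the Pontrjagin classes, Theorem \ref{mainthm} applies irrespective of the parity of $\dim W=d+1$ and yields $r^*\widehat{X}=0$ in $H^*(B\Diff(W);\Z)$; therefore $\widehat{X}(E)=0$. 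Reassembling the path components of $B$ finishes the argument, and equivalently the non-vanishing of any such $\widehat{X}(E)$ obstructs the existence of a fibrewise oriented null-bordism.

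There is essentially no analytic or homotopical obstacle here beyond Theorem \ref{mainthm} itself; the only thing to be careful about is the bookkeeping, namely that the structure-group reduction to $\Diff(W)$ and the identification of the fibrewise-boundary construction with the map $r$ are set up compatibly, so that the classifying map of $E$ genuinely factors through $r$. It is also worth noting that the Euler class is deliberately excluded from $X$: this is exactly what lets Theorem \ref{mainthm} be invoked in both parities, and it is consistent with the geometry, since $T^\pi E$ differs from the restriction of $T^{\bar\pi}\bar E$ to $E$ only by a trivial (inward-normal) line bundle, so their Pontrjagin classes agree while their Euler-class data do not.
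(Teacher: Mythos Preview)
Your argument is correct and is exactly the intended one: the paper states Corollary~C as an ``immediate corollary'' of Theorem~\ref{mainthm} with no further proof, and what you have written is a careful unwinding of that implication via the factorisation of the classifying map of $E$ through $r\colon B\Diff(W)\to B\Diff(\partial W)$. Your remarks on the bookkeeping (connected base, structure-group reduction, exclusion of the Euler class) are accurate and fill in precisely the details the paper leaves implicit.
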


An analogue of Theorem B for not necessarily orientable manifolds
states that in cohomology with $\Z/2\Z$ coefficients $r^*\widehat X$
is trivial for any monomial $X$ in the Stiefel-Whitney classes.

We shall take a geometric approach to the mapping class groups
that was first introduced   in
\cite{Madsen-Tillmann}. From this point of view
the universal MMM-classes can be
interpreted as elements in the (stable) cohomology of the infinite loop space
associated to a certain Thom spectrum denoted by $\MT(2)$.  
More generally, 
the proof of Theorem \ref{mainthm} comes out of the theory of the Thom
spectra $\MT(d)$ (defined below in section \ref{MT-section}) and is
related to the theory of ``spaces of manifolds'' or cobordism
categories as in \cite{GMTW, Genauer}, although we do
not actually rely on their results. 

Recall, there is a homotopy
fibre sequence of infinite loop spaces
\begin{equation}\label{fundamental-sequence}
\Omega^\infty \MT(d+1) \to \QQ BSO(d+1)_+ \stackrel{\delta}{\to}
\Omega^\infty \MT(d).
\end{equation}
A bundle of oriented $d$-manifolds over a base $B$ has a classifying
map $B \to \Omega^\infty \MT(d)$, and the generalized MMM-classes are
pulled back from universal classes in the cohomology of this infinite
loop space.  A simple calculation in section 2.3 shows
that $\delta^*\widehat{X} = 0$ if
and only if either $d$ is odd or $d$ is even and $X$ can be written as
a product of Pontrjagin classes (i.e. using only even powers of the
Euler class).  The proof of Theorem \ref{mainthm} consists of
observing, see section 3,  
that if a bundle of $d$-manifolds is the fibrewise boundary
of a bundle of $(d+1)$-manifolds with boundary then its classifying
map factors up-to-homotopy through $\QQ BSO(d+1)_+$.   This
factorization trick is motivated by the philosophy that the homotopy
fibre sequence \eqref{fundamental-sequence} corresponds to the exact
sequence
\[
\{\mbox{closed $(d+1)$-manifolds}\} \hookrightarrow
\{\mbox{$(d+1)$-manifolds with boundary}\} \stackrel{\partial}{\to}
\{\mbox{closed $d$-manifolds}\}.
\]

\subsection*{Acknowledgements}
The first author thanks Oscar Randal-Williams for helpful discussions.

\section{A cofibre sequence of Thom spectra}\label{MT-section}

For the reader's convenience we will recall the definition and
construction of the fibre sequence \eqref{fundamental-sequence} and
compute the map $\delta$ in cohomology.

\subsection{Definition of the spectra}
Let $\gamma_d$ denote the tautological bundle of oriented $d$-planes
over $BSO(d)$, and let $\MT(d)$ denote the Thom spectrum,
$\bTh(-\gamma_d)$, of the virtual bundle $-\gamma_d$.  Explicitly, let
$G_{d,n}$ denote the Grassmannian of oriented $d$-planes in
$\R^{d+n}$, let $\gamma_{d,n}$ denote the tautological $d$-plane
bundle over it, and let $\gamma_{d,n}^{\perp}$ denote the
complementary $n$-plane bundle.  The $(d+n)^{th}$ space of the
spectrum $\MT(d)$ is the Thom space
\[
\Th(\gamma_{d,n}^{\perp}).
\]  
The space $G_{d,n}$ sits inside $G_{d,n+1}$ and the restriction of
$\gamma_{d,n+1}^\perp$ to $G_{d,n}$ is canonically $\gamma_{d,n}^\perp
\oplus \R$.  The structure maps of the spectrum are defined by the
composition
\[
\Sigma \Th(\gamma_{d,n}^\perp) \simeq \Th(\gamma_{d,n}^\perp \oplus \R) \simeq
\Th(\gamma_{d,n+1}^\perp|_{G_{d,n}}) \hookrightarrow
\Th(\gamma_{d,n+1}^\perp).
\]

\subsection{A homotopy cofibre sequence of Thom spectra}
The suspension spectrum $\Sigma^\infty BSO(d+1)_+$ can be regarded as
the Thom spectrum of the trivial bundle of rank $0$.  In explicit
terms, the $(d+1 +n)^{th}$ space is $\Th(\gamma_{d+1,n}\oplus \gamma_{d+1,n}^
\perp)$
and the structure maps are as above.  The inclusion 
\begin{equation}\label{thom-inclusion}
\Th(\gamma_{d+1,n}^\perp) \hookrightarrow \Th(\gamma_{d+1,n}\oplus 
\gamma_{d+1,n}^\perp)
\end{equation}
induces a map of spectra
\begin{equation}\label{spectrum-map-1}
\MT(d+1) \to \Sigma^\infty BSO(d+1)_+.
\end{equation}
The cofibre of \eqref{spectrum-map-1} is known to be homotopy equivalent to
$\MT(d)$; for convenience we include a proof here.  

\begin{lemma}\label{cofibre-lemma}
Let $E$ and $F$ be vector bundles over a base $B$, let $p: S(F) \to B$
be the unit sphere bundle of $F$, and let $L$ denote the tautological
line bundle on $S(F)$.  There is a cofibre sequence
\[
Th(E) \hookrightarrow Th(E\oplus F) \stackrel{\delta}{\to} Th(p^*E \oplus L).
\]
\end{lemma}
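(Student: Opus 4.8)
The plan is to compute the homotopy cofibre of the first map, which I read as the zero-section inclusion $\Th(E)\hookrightarrow\Th(E\oplus F)$ induced by the zero section of the $F$-summand (as in \eqref{thom-inclusion}), and to identify it with $\Th(p^*E\oplus L)$. Since the Thom space of a subbundle always includes as a closed cofibration (the usual NDR argument with a fibrewise metric), it is enough to exhibit a homeomorphism from the honest quotient $\Th(E\oplus F)/\Th(E)$ to $\Th(p^*E\oplus L)$.

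The mechanism for that homeomorphism is radial coordinates in the $F$-direction: away from the zero section a vector $v\in F$ is $|v|$ times the unit vector $v/|v|\in S(F)$, and the radius $|v|$ is exactly a coordinate along the tautological line $L$ over $S(F)$. In the quotient $\Th(E\oplus F)/\Th(E)$ this radial interval has \emph{both} of its ends identified to the basepoint: one end because the zero section of $F$ has been collapsed (that is what $\Th(E)$ contributes), the other because the fibrewise unit sphere of $\Th(E\oplus F)$ is collapsed — so it becomes a circle which is canonically $\Th(L_s)$, and carrying the $E$-directions along identifies the quotient with $\Th(p^*E\oplus L)$. Concretely I expect the cleanest route is to write this out as a single map: in the model $\Th(V)=D(V)/S(V)$, define $\delta\colon\Th(E\oplus F)\to\Th(p^*E\oplus L)$ by sending $(e,f)$ with $f\ne 0$ and $|e|^2+|f|^2<1$ to the point lying over $s=f/|f|$ whose $p^*E$-coordinate is $e$ and whose $L$-coordinate is $\bigl(2|f|-\sqrt{1-|e|^2}\bigr)s$, and sending everything else to the basepoint; then verify that $\delta$ is continuous, vanishes on $\Th(E)$, and descends to a bijection $\Th(E\oplus F)/\Th(E)\to\Th(p^*E\oplus L)$ with continuous inverse given on $F$ by $\lambda s\mapsto\tfrac12\bigl(\lambda+\sqrt{1-|e|^2}\bigr)s$ (for compact base this last step is automatic, both sides being compact Hausdorff). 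The one point needing genuine care — which I would regard as the main, if minor, obstacle — is the choice of affine reparameterisation: it must be arranged so that the $|f|\to 0$ end and the $|e|^2+|f|^2\to 1$ end of the radial interval both land inside the collapsed sphere bundle $S(p^*E\oplus L)$; after that, continuity of $\delta$ along the collapsed loci and continuity of all the identifications over the base $B$ are routine. (A formula-free variant: with $q\colon D(F)\to B$ the disk bundle and a fibrewise identification of the disk $D(E_b\oplus F_b)$ with the bidisk $D(E_b)\times D(F_b)$ by radial rescaling, one obtains $\Th(E\oplus F)\cong D(q^*E)/\bigl(S(q^*E)\cup D(q^*E|_{S(F)})\bigr)$ with $\Th(E)$ the piece over the zero section $B\subset D(F)$, and then $D(F)\setminus B\cong S(F)\times(0,1]$ gives the identification directly.)

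Finally, for perspective: $L$ admits the tautological nowhere-zero section, so in fact $\Th(p^*E\oplus L)\cong\Th(p^*E\oplus\R)=\Sigma\Th(p^*E)$ and the sequence is, after this identification, nothing but the Puppe sequence of $\Th(p^*E)\to\Th(E)\to\Th(E\oplus F)$. Keeping $L$ in the statement is what makes the cofibre sequences natural enough to assemble, as $n$ increases, into the map of spectra \eqref{spectrum-map-1} and hence into the homotopy fibre sequence \eqref{fundamental-sequence}.
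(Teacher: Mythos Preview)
Your argument is correct and is essentially the same as the paper's: compute the honest quotient $\Th(E\oplus F)/\Th(E)$ and identify it with $\Th(p^*E\oplus L)$ using radial coordinates in the $F$-direction. The only difference is cosmetic---the paper works in the one-point-compactification model of the Thom space rather than the disk model, so its reparametrisation of the radial coordinate is simply $|v|\mapsto \log|v|\in\R$ and the boundary-sphere bookkeeping you flag as the ``main, if minor, obstacle'' does not arise.
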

\begin{proof}
Observe that the quotient space $Th(E\oplus F) / Th(E)$
consists of a basepoint together with the space of all triples $(b\in
B, u \in E_b, v \in F_b \smallsetminus \{0\})$, suitably topologised.
Sending
\[
(b,u,v) \mapsto \left(\frac{v}{|v|} \in S(F_b), \:\: u \in (p^*E)_{v/|v|},
\:\: \mathrm{log}(|v|) \cdot \frac{v}{|v|} \in
L_{v/|v|}\right)
\] 
defines a homeomorphism $Th(E\oplus F) / Th(E) \cong Th(p^*E \oplus
L)$.
\end{proof}

Under the identification of the above lemma, one can see that $\delta$
corresponds to the map defined by collapsing the complement of an
appropriate tubular neighbourhood of the embedding $j: S(F)
\hookrightarrow E\oplus F$ and using the canonical identification of
the normal bundle of $j$ with $p^*E \oplus L$.  In particular, if
$E\oplus F$ is isomorphic to a trivial bundle $\mathbb{R}^n$ then
$\delta$ is the pre-transfer for the projection $p$, and hence the
Gysin map $p_!$ on cohomology is given by $\delta^*$ composed with the
Thom isomorphism.

We are concerned with the case when $B$ is the Grassmannian
$G_{d+1,n}$ of oriented $(d+1)$-planes in $\mathbb{R}^{d+1+n}$, $F$ is
the tautological $(d+1)$-plane bundle $\gamma_{d+1,n}$, and $E$ is the
complementary $n$-plane bundle $\gamma_{d+1,n}^\perp$.  In this case
there is a map
\[
q: S(\gamma_{d+1,n}) \to G_{d,n+1}
\]
given by sending $(M \in G_{d+1,n}, v\in S(M))$ to the $d$-plane
$M\cap v^\perp$.  This map is a fibration, and the fibre over a
$d$-plane $N$ is the $n$-sphere $S(N^\perp)$.  Hence $q$ is
$n$-connected.  Observe that $q^*\gamma_{d,n+1}^\perp$ is canonically
isomorphic to $p^* \gamma_{d+1,n}^\perp \oplus L$, where $p:
S(\gamma_{d+1,n}) \to G_{d+1,n}$ is the projection and $L$ is the
tautological line bundle over $S(\gamma_{d+1,n})$.  Hence there is a
map of Thom spaces,
\[
Th(p^* \gamma_{d+1,n}^\perp \oplus L) \to Th(\gamma_{d,n+1}^\perp)
\]
that is $(2n+1)$-connected.  Combining this with the above cofibre
sequence and passing to spectra indexed by $n$ now gives the desired
homotopy cofibre sequence of spectra,
\begin{equation}\label{cofib-sequence}
\MT(d+1) \to \Sigma^\infty BSO(d+1)_+  \stackrel{\widetilde{\delta}}{\to} 
\MT(d)
\end{equation}
and hence a homotopy fibre sequence of infinite loop spaces
\[
\Omega^\infty \MT(d+1) \to \QQ BSO(d+1)_+ \stackrel{\delta}{\to} 
\Omega^\infty \MT(d).
\]

\subsection{Cohomology of Thom spectra and universal tangential classes}
For any spectrum $E$ there is a map
\[
\sigma^*: H^*(E) \to \widetilde{H}^*(\Omega_0^\infty E)
\]
from the spectrum cohomology of $E$ to the reduced cohomology of the basepoint
component $\Omega ^\infty _0 E$ of the
associated infinite loop space.  This map is induced by the
evaluation map
\[
\sigma: \Sigma ^n \Omega ^n E_n \to E_n
\]
that takes $(t, f)$ to $f(t)$ for $t \in S^n$ and $f : S^n \to E_n$. Thus
$\sigma$
commutes with maps of spectra.

Let $V$ be a virtual vector
bundle of virtual dimension $-d$ over a space $B$.  There is a Thom
class, $u$, in the degree $-d$ cohomology of the associated Thom
spectrum $\bTh(V)$ (with arbitrary coefficients if $V$ is orientable
and with $\Z/2\Z$ coefficients otherwise) and by the Thom
isomorphism, the spectrum cohomology $H^*(\bTh(V))$ 
is a free  $H^*(B)$-module of rank one
generated by the Thom class $u$.  
For the Thom spectrum
$\MT(d) = \Th(-\gamma_d)$ we thus have
\[
H^*(\MT(d);\Z) \cong u\cdot H^*(BSO(d);\Z),
\]
with $\mathrm{deg}\, u = -d$. 

Now, let $X$ 
be a monomial in the Euler class $e$ and the 
Pontrjagin classes $p_i$. We
define the associated \emph {universal tangential  class} as
\[
\widehat{X} := \sigma^*(uX) \in \widetilde{H}^*(\Omega^\infty_0 \MT(d);\Z).
\]
Note that by definition all universal tangential classes are stable in the 
sense that they come from spectrum cohomology.
Rationally, these classes (as $X$ ranges over a basis for
the degree $> d$ monomials) freely generate the cohomology ring of
$\Omega^\infty_0\MT(d)$. 

\begin{proposition}\label{computation-lemma}
Let $r = \lfloor d/2 \rfloor$ and  
$X= p_1 ^{k_1} \dots p_{r}  ^{k_r} \, e^s \in H^* (BSO(d); \Z).
$  
Consider the image of $\widehat X$
under $\delta^*: H^* (\Omega ^\infty \MT(d); \Z) \to H^*(\QQ BSO(d+1)_+;\Z)$.
\item {(i.)}  For $d$ odd, $\delta^*\widehat{X}=0$;
\item {(ii.)}  For $d$ even, $\delta^*\widehat{X} = 0$ when $s$ is even, 
and $\delta^*\widehat{X} = 2\sigma ^*(X/e) $ when $s$ is odd.
\end{proposition}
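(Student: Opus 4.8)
The plan is to reduce $\delta^{*}\widehat X$ to the integration‑over‑the‑fibre map of a universal sphere bundle and then to evaluate it with the projection formula. Since $\sigma$ commutes with maps of spectra, $\delta^{*}\widehat X=\delta^{*}\sigma^{*}(uX)=\sigma^{*}\bigl(\widetilde\delta^{*}(uX)\bigr)$, so everything reduces to identifying $\widetilde\delta^{*}(uX)\in H^{*}(\Sigma^{\infty}BSO(d+1)_{+};\Z)=H^{*}(BSO(d+1);\Z)$. Unwinding the construction of $\widetilde\delta$ from the previous two subsections — the pre‑transfer $\delta$ of Lemma~\ref{cofibre-lemma} for $p\colon S(\gamma_{d+1,n})\to G_{d+1,n}$, composed with the map of Thom spaces covering $q\colon S(\gamma_{d+1,n})\to G_{d,n+1}$ that is induced by the isomorphism $q^{*}\gamma_{d,n+1}^{\perp}\cong p^{*}\gamma_{d+1,n}^{\perp}\oplus L$, together with the remark there that $\delta^{*}$ followed by the Thom isomorphism is the Gysin map $p_{!}$ — I would show that, passing to the limit $n\to\infty$,
\[
\widetilde\delta^{*}(uX)=p_{!}\bigl(q^{*}X\bigr)\in H^{*-d}(BSO(d+1);\Z),
\]
where now $p\colon S(\gamma_{d+1})\to BSO(d+1)$ is the universal oriented $S^{d}$‑bundle, $q\colon S(\gamma_{d+1})\to BSO(d)$ classifies its vertical tangent bundle $T^{v}p$, and $p_{!}=\int_{S^{d}}$. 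The point I expect to need the most care is exactly this identification: one checks that $T^{v}p\cong q^{*}\gamma_{d}$, that the normal bundle appearing in Lemma~\ref{cofibre-lemma} is $p^{*}\gamma_{d+1,n}^{\perp}\oplus L$ which is stably $\R^{d+1+n}-T^{v}p$, and hence that the map of Thom spaces carries the Thom class $u$ of $-\gamma_{d}$ to the Thom class of $q^{*}(-\gamma_{d})$, so that no spurious Euler class appears.

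For the computation I would first record that $L\subset p^{*}\gamma_{d+1}$ has the tautological nowhere‑zero section $v\mapsto v$ and hence is trivial, so $p^{*}\gamma_{d+1}\cong T^{v}p\oplus\R$. Consequently $q^{*}p_{i}(\gamma_{d})=p_{i}(T^{v}p)=p^{*}p_{i}(\gamma_{d+1})$ and $q^{*}e(\gamma_{d})=e(T^{v}p)$, so writing $P$ for the monomial $p_{1}^{k_{1}}\cdots p_{r}^{k_{r}}$ in the Pontryagin classes of $\gamma_{d+1}$ we get $q^{*}X=p^{*}P\cdot e(T^{v}p)^{s}$ and therefore, by the projection formula,
\[
p_{!}\bigl(q^{*}X\bigr)=P\cdot p_{!}\bigl(e(T^{v}p)^{s}\bigr).
\]
Two elementary facts about $p_{!}$ then finish the job: $p_{!}(1)\in H^{-d}(BSO(d+1))=0$, and $p_{!}\bigl(e(T^{v}p)\bigr)=\langle e(TS^{d}),[S^{d}]\rangle=\chi(S^{d})$.

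Now the three cases. If $d$ is odd then $T^{v}p$ is oriented of odd rank $d$, so its integral Euler class is the Bockstein $\beta\,w_{d-1}(T^{v}p)$; since Stiefel–Whitney classes are stable and $T^{v}p\oplus\R\cong p^{*}\gamma_{d+1}$, this equals $p^{*}\bigl(\beta\,w_{d-1}(\gamma_{d+1})\bigr)$, so $e(T^{v}p)^{s}$ is pulled back from the base, $p_{!}(q^{*}X)$ is a multiple of $p_{!}(1)$, and $\delta^{*}\widehat X=0$. If $d=2r$ is even then $e(T^{v}p)^{2}=p_{r}(T^{v}p)=p^{*}p_{r}(\gamma_{d+1})$. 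For $s$ even, $e(T^{v}p)^{s}=p^{*}\bigl(p_{r}(\gamma_{d+1})^{s/2}\bigr)$ is again pulled back, so $p_{!}(q^{*}X)=0$ and $\delta^{*}\widehat X=0$. For $s$ odd, $e(T^{v}p)^{s}=p^{*}\bigl(p_{r}(\gamma_{d+1})^{(s-1)/2}\bigr)\cdot e(T^{v}p)$, whence
\[
p_{!}\bigl(q^{*}X\bigr)=P\,p_{r}(\gamma_{d+1})^{(s-1)/2}\,\chi(S^{2r})=2\,(X/e),
\]
where $X/e=p_{1}^{k_{1}}\cdots p_{r}^{k_{r}}e^{s-1}=P\,p_{r}^{(s-1)/2}$ is a polynomial in the Pontryagin classes and so names a class in $H^{*}(BSO(d+1))$; applying $\sigma^{*}$ gives $\delta^{*}\widehat X=2\sigma^{*}(X/e)$. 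Beyond the identification of $\widetilde\delta^{*}(uX)$ with $p_{!}(q^{*}X)$, the only external inputs are that the integral Euler class of an oriented vector bundle is the Bockstein of its top‑minus‑one Stiefel–Whitney class in odd rank, and its own square $p_{\mathrm{rank}/2}$ in even rank.
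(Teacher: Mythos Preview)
Your argument is correct and is essentially the paper's own proof: identify $\widetilde\delta^{*}$, via the Thom isomorphism, with the Gysin map $\pi_{!}$ for the sphere bundle $\pi\colon S(\gamma_{d+1})\to BSO(d+1)$, pull the Pontryagin factor through by the projection formula, and finish using $\pi_{!}(1)\in H^{-d}=0$ and $\pi_{!}e=\chi(S^{d})$. The only difference is in the odd-$d$ case, where the paper reduces to $s\le 1$ by asserting $e^{2}=0$ in $H^{*}(BSO(d);\Z)$, while you instead note that $e=\beta w_{d-1}$ is a stable class and hence pulled back from $BSO(d+1)$; your route is arguably the more careful one here, since $e^{2}$ has nonzero mod~$2$ reduction $w_{d}^{2}$.
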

\begin{proof}
  Identify the inclusion $BSO(d) \hookrightarrow BSO(d+1)$ with the
  projection 
  \[
  \pi: S(\gamma_{d+1}) \to BSO(d+1)
  \]
  of the unit sphere bundle of $\gamma_{d+1}$.  The map
  $\widetilde{\delta}^*: H^* (\MT(d); \Z) \to H^*(BSO(d+1);\Z)$ can
  then be identified, via the Thom isomorphism, with the Gysin map
  $\pi_!$. The image of the Euler class $e$ under the Gysin map is the
  Euler characteristic of the fiber. Thus $\pi_! e = 0$ when $d$ is
  odd and $\pi_! e = 2$ when $d$ is even. The Pontrjagin classes on
  $BSO(d)=S(\gamma_{d+1})$ are the pullbacks of the Pontrjagin classes
  on $BSO(d+1)$.  The statement now follows from the formula
  $\pi_!(\pi^* \alpha \cdot \beta) = \alpha \cdot \pi_!
  \beta$. Indeed, as $e^2=0$ for $d$ odd and $e^2 = p_{d/2}$ for $d$
  even, we may assume that $s=0$ or $s=1$ in the definition of $X$,
  and compute
\[ 
\delta^* \widehat X = \delta ^* \sigma ^* (uX) = \sigma ^* \widetilde 
\delta ^* (uX)
= \sigma ^* (\pi_! X )= \sigma ^* (p_1^{k_1} \dots p_{r} ^
{k_{r}} \pi _! \, e^s),
\]
which gives the desired result.
\end{proof}


To illustrate the above result consider the case when $d=2$.
In that case we have
\[
H^* (\Omega ^\infty _0 \MT (2); \Q) = \Q[ \kappa_1, \kappa_2, \dots]
\]
with $\kappa_i = \widehat {e^{i+1}} $  of degree  $2i$,
while
\[
H^* (\QQ BSO (3); \Q ) = \Q [ \rho_1, \rho_2, \dots ]
\]
with $ \rho _i = \sigma^* {p_1 ^i} $  of degree  $ 4i$ .
Then $\delta ^* \kappa _{2i+1} = 0$ while $\delta ^* \kappa _{2i} = 2 \rho _i$.

\noindent
{\bf Remark 2.2:} 
When working over $\Z/2\Z$ (in the orientable as well as non-orientable case) 
a similar computation yields that for any 
monomial  $X$ in the Stiefel-Whitney classes $\delta^*$ maps $\widehat X$
to zero.   

\section{Classifying maps}

We show here that $\delta $ is the universal restriction-to-the-boundary 
map $r: B\Diff (W) \to B\Diff (\partial W)$.

\subsection{Bundles of closed manifolds}
Pontrjagin-Thom theory
allows one to show that the infinite loop space $\Omega^\infty \MT(d)$
classifies concordance classes of oriented $d$-dimensional
\emph{formal bundles}, which are objects slightly more general than
fibre bundles of closed oriented $d$-manifolds. Such an object over a
smooth base $B$ consists of a smooth proper map $\pi: E \to
B$ of codimension $-d$ and a bundle epimorphism $\delta \pi: TE \to
TB$ (which need not be the differential of $\pi$) with an orientation
of $ker(\delta \pi)$, cf. \cite {Madsen-Weiss}, \cite {Eliashberg-Galatius}.

For a bundle $\pi: E \to B$, the classifying map
\[
\alpha_\pi: B \to \Omega^\infty \MT(d)
\]
is defined concisely as follows.  Let $T^\pi E$ denote the fibrewise
tangent bundle.  The classifying map is the pre-transfer,
\[
\mbox{pre-trf}: B \to \Omega^\infty \bTh(-T^\pi E)
\]
followed by the map $\Omega^\infty \bTh(-T^\pi E) \to \Omega^\infty
\bTh(-\gamma_d) = \Omega^\infty \MT(d)$ induced by the classifying map
for $T^\pi E$.  To construct this map $\alpha _\pi$ 
explicitly, choose a lift of
$\pi$ to an embedding
\[
\widetilde{\pi}: E \hookrightarrow B\times \R^{d+n}
\]
for some sufficiently large $n$, and choose a fibrewise tubular
neighborhood $U \subset B\times \R^{d+n}$.  Let
$N^{\widetilde{\pi}}E$ denote the fibrewise normal bundle of
$\widetilde{\pi}$.  We obtain a map
\begin{equation}\label{collapse-map}
\Sigma^{d+n} B_+ \to \Th(N^{\widetilde{\pi}}E)
\end{equation}
by identifying $U$ with the normal bundle and collapsing the complement of
$U$ to the basepoint.  Classifying the fibrewise normal bundle gives a map
\begin{equation}\label{classify-map}
\Th(N^{\widetilde{\pi}}E) \to \Th(\gamma_{d+n}^\perp).
\end{equation}
The adjoint of the composition of \eqref{collapse-map} and \eqref{classify-map}
is a map $B \to \Omega^{d+n} \Th(\gamma_{d,n}^\perp)$ which gives the
classifying map  $\alpha_\pi$  upon composing with the map to $\Omega^\infty \MT(d)$.
One can check that the homotopy class of this map is independent of the
choice of embedding and tubular neighborhood.

The following propositions follow immediately from the description of
the classifying map in terms of the pre-transfer, and are well-known.
\begin{proposition}
The classifying map $\alpha_\pi$ is natural (up to homotopy) with respect to pullbacks.
\end{proposition}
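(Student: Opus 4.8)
The plan is to exploit the description of $\alpha_\pi$ given above as a pre-transfer followed by the map of infinite loop spaces induced by the classifying map (Gauss map) of the fibrewise tangent bundle. Both the pre-transfer and the classifying map are compatible with pullback of the base in an evident way, so their composite should be as well; the only actual work is to fix compatible choices in the explicit model and then to invoke the fact, already noted in the excerpt, that the homotopy class of $\alpha_\pi$ does not depend on those choices.

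In more detail: let $f\colon B'\to B$ be a smooth map, $\pi\colon E\to B$ a (formal) bundle, and $\pi'\colon E'=f^*E\to B'$ its pullback, with canonical bundle map $g\colon E'\to E$ covering $f$. I would start from a lift $\widetilde\pi\colon E\hookrightarrow B\times\R^{d+n}$ of $\pi$ together with a fibrewise tubular neighbourhood $U$, and observe that the map $E'=B'\times_B E\to B'\times\R^{d+n}$ sending $(b',e)$ to $\bigl(b',\ \mathrm{pr}_{\R^{d+n}}\widetilde\pi(e)\bigr)$ is a lift $\widetilde{\pi'}$ of $\pi'$ to an embedding: it is injective because two points of $E'$ lying over the same $b'$ lie over the same point $f(b')$ of $B$, where $\widetilde\pi$ is injective, and it is a proper embedding because all the relevant data are pulled back from $\widetilde\pi$. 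Then $(f\times\id)^{-1}(U)$ is a fibrewise tubular neighbourhood of $\widetilde{\pi'}$, and the fibrewise normal bundle satisfies $N^{\widetilde{\pi'}}E'\cong g^*N^{\widetilde\pi}E$. Consequently the collapse map \eqref{collapse-map} for $\pi'$ factors as $\Sigma^{d+n}f_+$ followed by $\Th(g)$, and the normal-bundle classifying map \eqref{classify-map} for $\pi'$ can be taken to be $\Th(g)$ followed by the one for $\pi$. Composing these, the resulting map $\Sigma^{d+n}B'_+\to\Th(\gamma_{d,n}^\perp)$ for $\pi'$ is exactly the one for $\pi$ precomposed with $\Sigma^{d+n}f_+$; passing to adjoints and then to $\Omega^\infty\MT(d)$ yields $\alpha_{\pi'}=\alpha_\pi\circ f$ strictly, for these particular choices.

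Finally I would address the fact that $\alpha_{\pi'}$ is only defined up to homotopy: a priori one may choose an embedding and fibrewise tubular neighbourhood for $E'$ bearing no relation to those chosen for $E$. Since the homotopy class is independent of these choices, any other model of $\alpha_{\pi'}$ is homotopic to the pulled-back one, so $\alpha_{\pi'}\simeq\alpha_\pi\circ f$. (If $B'$ is not a manifold, or $f$ is only continuous, one should first replace $f$ up to homotopy by a smooth map between manifolds, which changes neither side.) I do not expect any serious obstacle here: the statement really is formal once one has the pre-transfer description, and the only thing to be careful about is the bookkeeping of the naturality of the Pontrjagin--Thom collapse construction under pullback of the base, which is entirely standard.
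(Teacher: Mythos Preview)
Your argument is correct and is precisely the unpacking the paper has in mind: the paper does not actually give a proof of this proposition, stating only that it ``follow[s] immediately from the description of the classifying map in terms of the pre-transfer, and [is] well-known.'' Your write-up is a careful execution of exactly that sketch---pulling back the embedding and tubular neighbourhood, observing the collapse and classifying maps factor accordingly, and then invoking independence of choices---so there is nothing to compare or correct.
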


\begin{proposition}
Given a bundle $\pi: E \to B$ and a class $\widehat{X} \in
H^*(\Omega^\infty_0\MT(d);\Z)$ defined by a monomial $X$ in the Pontrjagin
classes and Euler class on $BSO(d)$,
\[
\alpha_\pi^*\widehat{X} = \pi_!X(T^\pi E).
\]
\end{proposition}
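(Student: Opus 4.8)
The plan is to unwind the definition $\widehat{X}=\sigma^*(uX)$, exploit the factorization of $\alpha_\pi$ through the pre-transfer, and then recognize what remains as the Pontrjagin--Thom description of integration over the fibre. First, recall that $\alpha_\pi$ is, by construction, the composite of the pre-transfer $\mbox{pre-trf}\colon B\to\Omega^\infty\bTh(-T^\pi E)$ with $\Omega^\infty$ of the map of spectra $\bTh(c)\colon\bTh(-T^\pi E)\to\bTh(-\gamma_d)=\MT(d)$ induced by a classifying map $c\colon E\to BSO(d)$ for $T^\pi E$. Since $\sigma$ commutes with maps of spectra, $(\Omega^\infty\bTh(c))^*\circ\sigma^*=\sigma^*\circ\bTh(c)^*$. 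The map $\bTh(c)$ carries the integral Thom class $u\in H^{-d}(\MT(d))$ to the Thom class $u^\pi\in H^{-d}(\bTh(-T^\pi E))$ (this uses the orientation of $T^\pi E=\ker(\delta\pi)$) and carries $X=X(\gamma_d)$ to $X(T^\pi E)$, so $\bTh(c)^*(uX)=u^\pi\cdot X(T^\pi E)$. Hence the claim reduces to the identity
\[
\mbox{pre-trf}^*\,\sigma^*\bigl(u^\pi\cdot\beta\bigr)=\pi_!\beta,\qquad \beta\in H^*(E),
\]
taken with $\beta=X(T^\pi E)$.

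To prove this identity I would spell out the pre-transfer explicitly, exactly as in the construction of $\alpha_\pi$: choose a lift $\widetilde\pi\colon E\hookrightarrow B\times\R^{d+n}$ with fibrewise tubular neighbourhood $U$ and fibrewise normal bundle $N^{\widetilde\pi}E$, so that $N^{\widetilde\pi}E\oplus T^\pi E\cong\R^{d+n}$ and $\Th(N^{\widetilde\pi}E)$ is the $(d+n)$th space of $\bTh(-T^\pi E)$. Then $\mbox{pre-trf}$ is the adjoint, at level $d+n$, of the collapse map $c_U\colon\Sigma^{d+n}B_+\to\Th(N^{\widetilde\pi}E)$. Under the $(\Sigma^\infty,\Omega^\infty)$ adjunction one has, for any $\beta'\colon B\to\Omega^\infty F$ with stable adjoint $\widehat{\beta'}\colon\Sigma^\infty B_+\to F$ and any $y\in H^k(F)$, the equality ${\beta'}^*\sigma^*(y)=\widehat{\beta'}^*(y)$ in $H^k(B)$. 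Applying this with $F=\bTh(-T^\pi E)$ and $y=u^\pi\cdot\beta$ (represented at level $d+n$ by $u_N\cdot\beta\in\widetilde{H}^{*+n}(\Th(N^{\widetilde\pi}E))$, where $u_N$ is the Thom class of $N^{\widetilde\pi}E$), we find that $\mbox{pre-trf}^*\,\sigma^*(u^\pi\cdot\beta)$ is the composite of the Thom isomorphism $\beta\mapsto u_N\cdot\beta$, pullback along $c_U$, and the desuspension isomorphism $\widetilde{H}^{*+d+n}(\Sigma^{d+n}B_+)\cong H^*(B)$. This is precisely the classical Pontrjagin--Thom formula for the Gysin map $\pi_!\colon H^*(E)\to H^{*-d}(B)$, and the degrees match since $\deg u^\pi=-d$.

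The routine points I would not dwell on are: independence of the construction from the choice of embedding and tubular neighbourhood (already invoked in defining $\alpha_\pi$); the fact that $\sigma^*$ is compatible with the suspension--loop adjunction at each finite stage of the spectra; and the usual caveat that $\alpha_\pi$ need not land in $\Omega^\infty_0\MT(d)$, so the asserted equality of classes is read on each path component. The one step that is genuinely more than bookkeeping, and the main thing to get right, is the last identification --- that pairing the pre-transfer with a Thom class reproduces the umkehr map $\pi_!$. This is the standard geometric model for integration over the fibre, so no real obstacle arises; the whole proposition is essentially a translation exercise between the spectrum-level and cohomological descriptions of the same map.
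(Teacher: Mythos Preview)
Your argument is correct and is precisely the approach the paper has in mind: the paper does not give a proof, stating only that the proposition ``follows immediately from the description of the classifying map in terms of the pre-transfer'' and is ``well-known.'' You have simply written out those details --- factoring $\alpha_\pi$ through the pre-transfer, pulling back $uX$ to $u^\pi\cdot X(T^\pi E)$ via naturality of Thom classes, and identifying the remaining composite with the Pontrjagin--Thom model of $\pi_!$ --- so there is nothing to compare.
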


\subsection{Bundles of manifolds with boundary}
Given a bundle, $\pi: E \to B$, of oriented $(d+1)$-manifolds with boundary,
let 
\[
\beta_\pi: B \to \QQ BSO(d+1)_+ 
\]
denote the composition of the transfer, $\mathrm{trf}: B \to \QQ E_+$,
followed by the map $\QQ E_+ \to \QQ BSO(d+1)_+$ induced by classifying
$T^\pi E$.  To construct this map $\beta _\pi$ concretely, choose an embedding
$\widetilde{\pi}$ of $E$ into $E\times \R^{d+1+n}$ over $\pi$.  A
tubular neighborhood $U$ of $E$ can then be identified with
the subspace of $T^\pi E \oplus N^{\widetilde{\pi}}E \cong E \times
\R^{d+1+n}$ consisting of those vectors for which the tangential
component is zero if they sit over the interior of a fibre and over
the boundary the tangential component is outward pointing normal to
the boundary.

Hence collapsing the complement of $U$ and classifying the fibrewise
tangent bundle gives maps
\[
\Sigma^{d+1+n} B_+ \to U^+ \to \Th(T^\pi E \oplus N^{\widetilde{\pi}}E)
\to \Th(\gamma_{d+1,n}\oplus \gamma_{d+1,n}^\perp),
\]
where $()^+$ denotes the one-point compactification.  Taking the
adjoint of this composition and then mapping into the colimit as $n$
goes to infinity gives the desired map
\[
\beta_\pi: B \to \QQ BSO(d+1)_+.
\]
Again, one can check that the homotopy class of this map is
independent of the choices made in the construction.
Analogous to $\alpha_\pi$, $\beta_\pi$ can be interpreted as the classifying map
of formal bundles of $(d+1)$-dimensional manifolds with boundary.

\begin{proposition}
  The classifying map $\beta_\pi$ is natural (up to homotopy) with
  respect to pullbacks.
\end{proposition}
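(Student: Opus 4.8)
The statement to prove is that $\beta_\pi$ is natural up to homotopy with respect to pullbacks. The plan is to argue exactly as in the analogous Proposition for $\alpha_\pi$, by reducing everything to the naturality of the Becker--Gottlieb transfer.

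First I would recall that $\beta_\pi$ was defined as a composite: the transfer $\mathrm{trf}\colon B \to \QQ E_+$ followed by the map $\QQ E_+ \to \QQ BSO(d+1)_+$ induced by a classifying map $E \to BSO(d+1)$ for the fibrewise tangent bundle $T^\pi E$. Given a pullback square
\[
\begin{array}{ccc}
E' & \to & E \\
\downarrow & & \downarrow \\
B' & \to & B
\end{array}
\]
with $E' = g^* E$ and $\pi' \colon E' \to B'$ the pulled-back bundle, I would first invoke the naturality of the Becker--Gottlieb transfer: the square relating $\mathrm{trf}_{\pi'}\colon B' \to \QQ E'_+$ and $\mathrm{trf}_\pi \colon B \to \QQ E_+$ (with the horizontal maps induced by $g$ and $E' \to E$) commutes up to homotopy. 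This is standard; it holds because the transfer can be constructed by a fibrewise Pontrjagin--Thom collapse, and a fibrewise tubular neighbourhood for $\pi$ pulls back to one for $\pi'$, so the collapse maps are compatible up to the usual choices. Then I would observe that the fibrewise tangent bundle is itself natural: $T^{\pi'} E' \cong (E'\to E)^* T^\pi E$ canonically, since a pullback of bundles of manifolds restricts to a pullback on each fibre. Hence a classifying map for $T^\pi E$ composed with $E' \to E$ is a classifying map for $T^{\pi'}E'$ (unique up to homotopy over $BSO(d+1)$), and the induced maps on $\QQ(-)_+$ assemble into a homotopy-commutative square. Concatenating the two homotopy-commutative squares gives $\beta_\pi \circ g \simeq (\text{map induced on }\QQ BSO(d+1)_+) \circ \beta_{\pi'}$, but since the target $\QQ BSO(d+1)_+$ does not depend on the base, the map it induces is the identity, so $\beta_\pi \circ g \simeq \beta_{\pi'}$, which is the assertion.

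Alternatively, and perhaps more cleanly, I would run the argument directly from the explicit construction in the text: choose an embedding $\widetilde{\pi}\colon E \hookrightarrow E\times \R^{d+1+n}$ over $\pi$ and a tubular neighbourhood $U$ as described. Pulling back along $g$ produces an embedding $\widetilde{\pi'}\colon E' \hookrightarrow E'\times \R^{d+1+n}$ over $\pi'$ with tubular neighbourhood $g^* U$, and the collapse map $\Sigma^{d+1+n}B'_+ \to (g^*U)^+$ is the pullback of $\Sigma^{d+1+n}B_+ \to U^+$. Since $T^{\pi'}E' \oplus N^{\widetilde{\pi'}}E'$ is the pullback of $T^\pi E \oplus N^{\widetilde{\pi}}E$, the remaining maps in the defining composition also pull back, so the adjoints agree after passing to the colimit over $n$. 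The only subtlety here is the usual one already flagged in the excerpt ("one can check that the homotopy class is independent of the choices made"): different embeddings and tubular neighbourhoods upstairs and downstairs must be reconciled, but by that independence statement one is free to choose compatible ones, so no homotopy ambiguity survives.

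The main obstacle is not conceptual but bookkeeping: making precise the claim that the fibrewise Pontrjagin--Thom/transfer construction commutes with pullback on the nose for a suitably chosen embedding and tubular neighbourhood, and then checking that the choices of classifying map for $T^\pi E$ (which are only well-defined up to homotopy) can be made coherently. Since this is precisely the same point that already had to be handled for $\alpha_\pi$ in the preceding proposition, and since both $\mathrm{trf}$ and $T^\pi E$ are manifestly natural constructions, I expect this to be routine and would simply say "the proof is identical to that of Proposition~\ref{...} for $\alpha_\pi$, using the naturality of the transfer and of the fibrewise tangent bundle."
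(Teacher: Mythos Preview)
Your proposal is correct and is essentially what the paper has in mind: the paper gives no proof at all for this proposition, treating it as evident by analogy with the $\alpha_\pi$ case and the remark that $\beta_\pi$ is the transfer followed by classifying $T^\pi E$. Your expansion via naturality of the Becker--Gottlieb transfer and of the fibrewise tangent bundle (or equivalently via pulling back the explicit embedding and tubular neighbourhood) is exactly the routine verification the paper is leaving to the reader.
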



\subsection{Restricting to the boundary}
The classifying maps $\alpha$ and $\beta$ constructed above for bundles of
closed manifolds and manifolds with boundary are compatible in two
ways.  First, it is easy to see that regarding a bundle of closed
manifolds as a bundle of manifolds with (empty) boundary is compatible with
the map $\Omega^\infty \MT(d+1) \to \QQ BSO(d+1)_+$.  More importantly for us,
the fibrewise boundary of a bundle of $(d+1)$-manifolds is a bundle of
$d$-manifolds and the classifying maps for these two bundles are
compatible in the following sense.

\begin{proposition}\label{commuting-diagram}
  Given a bundle of oriented $(d+1)$-manifolds $\pi: E \to B$ with
  fibrewise boundary bundle $\partial \pi: \partial E \to B$, the
  diagram
  \[
  \begin{diagram}
    \node{B} \arrow{s,l}{\beta_{\pi}} \arrow{se,t}{\alpha_{\partial \pi}} \\
    \node{\QQ BSO(d+1)_+} \arrow{e,b}{\delta} \node{\Omega^\infty
      \MT(d)}
  \end{diagram}
  \]
  commutes up to homotopy.
\end{proposition}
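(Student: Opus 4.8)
The plan is to compute all three maps by explicit Pontrjagin--Thom collapse constructions, carried out with respect to \emph{compatible} choices of fibrewise embedding and tubular neighbourhood, and then to identify $\delta\circ\beta_\pi$ with $\alpha_{\partial\pi}$ up to an evident isotopy. Conceptually, $\beta_\pi$ is the pre-transfer along $\pi$ with its boundary-sensitive collapse, and by the discussion following Lemma~\ref{cofibre-lemma} the map $\widetilde\delta$ realizing $\delta$ is, level by level, essentially the pre-transfer along the sphere-bundle projection; composing the two should isolate exactly the boundary contribution, which is the pre-transfer along $\partial\pi$, i.e. $\alpha_{\partial\pi}$. The work is in making the geometry of this match-up precise. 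First I would fix a fibrewise collar $\partial E\times[0,1)\hookrightarrow E$ and choose the fibrewise embedding $\widetilde\pi\colon E\hookrightarrow B\times\R^{d+1+n}$ used to build $\beta_\pi$ so that, writing $\R^{d+1+n}=\R^{d+n}\times\R$, the collar is sent to $(e,t)\mapsto(\iota(e),-t)$, where $\iota\colon\partial E\hookrightarrow B\times\R^{d+n}$ is the embedding used to build $\alpha_{\partial\pi}$. Then $\widetilde\pi(\partial E)=\iota(\partial E)\times\{0\}$, the fibrewise outward normal $n_{\mathrm{out}}$ of $\partial E$ is the last coordinate direction, and consequently $N^{\widetilde\pi}E|_{\partial E}\cong N^\iota(\partial E)$ while $T^\pi E|_{\partial E}\cong T^\pi(\partial E)\oplus\underline\R$ with the $\underline\R$ spanned by $n_{\mathrm{out}}$. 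I would also arrange the ``thickened'' tubular neighbourhood $U$ in the construction of $\beta_\pi$ to protrude past $\partial E$ by slightly more than one unit in the outward-normal direction.

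Second I would unwind $\delta\circ\beta_\pi$ at spectrum level $d+1+n$. The map $\beta_\pi$ is adjoint to the collapse $\Sigma^{d+1+n}B_+\to U^+\cong\Th(T^\pi E\oplus N^{\widetilde\pi}E)\to\Th(\gamma_{d+1,n}\oplus\gamma_{d+1,n}^\perp)$; postcomposing with $\widetilde\delta$ first collapses, by Lemma~\ref{cofibre-lemma} with $E=\gamma_{d+1,n}^\perp$ and $F=\gamma_{d+1,n}$, the complement of a tubular neighbourhood of $S(\gamma_{d+1,n})\hookrightarrow\gamma_{d+1,n}^\perp\oplus\gamma_{d+1,n}$. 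Since composites of collapse maps are again collapse maps, the composite is a collapse onto a tubular neighbourhood of the locus in $U$ mapping into a neighbourhood of $S(\gamma_{d+1,n})$. A point of $U$ over $e\in E$ carries a vector of $(T^\pi E\oplus N^{\widetilde\pi}E)_e=\R^{d+1+n}$ whose tangential component vanishes over interior points and is outward normal over boundary points; it meets a neighbourhood of $S(\gamma_{d+1,n})=\{\text{unit tangential vectors with zero normal part}\}$ exactly when $e\in\partial E$, the tangential part has length near $1$, and the normal part is near $0$. Hence the surviving locus is a copy of $\partial E$, sitting in $B\times\R^{d+1+n}$ as the translate $\iota(\partial E)\times\{1\}$, with normal bundle $N^\iota(\partial E)\oplus L$, $L$ the line spanned by $n_{\mathrm{out}}$. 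Under the identification $q^*\gamma_{d,n+1}^\perp\cong p^*\gamma_{d+1,n}^\perp\oplus L$ of section~\ref{MT-section}, the remaining factor of $\widetilde\delta$ is induced by $e\mapsto(T^\pi_e E,n_{\mathrm{out}}(e))\mapsto T^\pi_eE\cap n_{\mathrm{out}}(e)^\perp=T^\pi_e(\partial E)$, i.e. it classifies $T^\pi(\partial E)$. So $\delta\circ\beta_\pi$ is adjoint to $\Sigma^{d+1+n}B_+\to\Th(N^\iota(\partial E)\oplus\underline\R)\to\Th(\gamma_{d,n+1}^\perp)$, the collapse for $\iota(\partial E)\times\{1\}$ followed by the classifying map for $T^\pi(\partial E)$.

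Finally I would compare this with $\alpha_{\partial\pi}$. By construction $\alpha_{\partial\pi}$ is, in the colimit, adjoint to $\Sigma^{d+n}B_+\to\Th(N^\iota(\partial E))\to\Th(\gamma_{d,n}^\perp)$; suspending once and using $\gamma_{d,n+1}^\perp|_{G_{d,n}}\cong\gamma_{d,n}^\perp\oplus\underline\R$ together with the structure map of $\MT(d)$ turns this into $\Sigma^{d+1+n}B_+\to\Th(N^\iota(\partial E)\oplus\underline\R)\to\Th(\gamma_{d,n+1}^\perp)$, the collapse now for $\iota(\partial E)\times\{0\}$ and the bundle map again classifying $T^\pi(\partial E)$. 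The two descriptions differ only in that the collapse is onto $\iota(\partial E)\times\{1\}$ rather than $\iota(\partial E)\times\{0\}$, and in the bookkeeping of $L$ versus the suspension coordinate; the first discrepancy is removed by the linear isotopy translating in the last coordinate (collapses along isotopic embeddings are homotopic), and the second by observing that $L$ is canonically trivialised by $n_{\mathrm{out}}$ compatibly with the orientation conventions. Taking the colimit in $n$ and then $\Omega^\infty$ gives $\delta\circ\beta_\pi\simeq\alpha_{\partial\pi}$. I expect the main obstacle to be exactly the bookkeeping in the third step: one must check that $U$ and the collapse defining $\widetilde\delta$ can be chosen coherently so that the composed collapse is genuinely a Pontrjagin--Thom collapse onto the asserted copy of $\partial E$, with the asserted normal data and orientation --- in particular that nothing survives over the fibre interiors, and that the isomorphism $q^*\gamma_{d,n+1}^\perp\cong p^*\gamma_{d+1,n}^\perp\oplus L$ is the one compatible with the collar.
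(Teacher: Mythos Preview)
Your argument is correct and follows essentially the same route as the paper's own proof: both identify the composite $\delta\circ\beta_\pi$, at a finite spectrum level, as a Pontrjagin--Thom collapse onto a copy of $\partial E$ embedded in $B\times\R^{d+1+n}$ (shifted outward along the normal direction by the length $\varphi^{-1}(0)$ coming from the diffeomorphism in the definition of $\widetilde\delta$), observe that this is a valid instance of the construction of $\alpha_{\partial\pi}$ for a different choice of embedding and tubular neighbourhood, and invoke independence of choices. The only stylistic difference is that you pre-arrange the embedding $\widetilde\pi$ to be collar-compatible so that the surviving locus is explicitly $\iota(\partial E)\times\{1\}$, whereas the paper works with an arbitrary embedding and instead \emph{reads off} the new embedding $\rho\colon p\mapsto v_p$ as the preimage of the zero section; this makes the paper's version a bit shorter but your version more transparent about the isotopy.
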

\begin{proof}
 Fix an embedding $\widetilde{\pi}: E \hookrightarrow B\times \R^{d+n}$ over
  $\pi$ and a tubular neighborhood $U$.  Let $U_{\partial}
  \subset U$ be the subspace sitting over the fibrewise boundary of
  $E$.  The lower composition in the diagram comes from the adjoint of
  a map
  \begin{equation}\label{finite-stage}
    \Sigma^{d+1+n} B_+ \to \Th(\gamma_{d+1,n}\oplus \gamma_{d+1,n}^\perp) \to
    \Th(\gamma_{d,n+1}^\perp)
  \end{equation}
  which collapses the complement of $U_{\partial}$ to the
  basepoint. The space $U_\partial$ is identified with the subspace of
  the vector bundle $(T^\pi E \oplus N^{\widetilde{\pi}}E)
  |_{\partial E}$ consisting of vectors for which the tangential
  component is outward pointing normal to $\partial E$.  In the
  fibre over any point $p \in \partial E$ there is a unique point
  $v_p$ which is sent by the map \eqref{finite-stage} to the zero
  section in $\Th(\gamma_{d,n+1}^\perp)$.  Explicitly, the component of
  $v_p$ that is normal to $E$ is zero and the tangential component is
  outward pointing normal to $\partial E$ and has length equal to
  $\varphi^{-1}(0)$, where $\varphi: (0,\infty) \to \R$ is the
  diffeomorphism used in defining the map $\delta: \QQ BSO(d+1)_+ \to
  \Omega^\infty \MT(d)$.  The map
  \[
  \rho: p \mapsto v_p \in U_\partial \subset B \times \R^{d+1+n}
  \]
  gives an embedding of $\partial E$ into $B\times \R^{d+1+n}$ over
  $\partial\pi$.  Observe that $U_\partial$ is a tubular
  neighborhood of the embedding $\rho$, and the composition
  \eqref{finite-stage} collapses the complement of $U_\partial$,
  identifies it with the normal bundle of $\rho$ and classifies this
  bundle of oriented $(n+1)$-planes in $\R^{d+1+n}$.  Hence the lower
  composition in the diagram in the statement of the proposition is a map
  $\alpha'_{\partial \pi}$ constructed exactly as $\alpha_{\partial
    \pi}$ but with a different choice of embedding and tubular
  neighborhood.  Since different choices lead to homotopic maps, the
  diagram commutes up to a homotopy.
\end{proof}

\section{Proofs of the theorems}

Theorem \ref{mainthm} follows directly from Proposition
\ref{commuting-diagram} and Proposition \ref{computation-lemma}.  Theorem
\ref{handlebody-thm} is the special case when $W$ is a 3-dimensional
oriented handlebody of genus $g \geq 2$.

\end{document}